\pgfplotsset{compat=1.16}
\newtheorem{The}{Theorem}[section]
\newtheorem{Lemma}{Lemma}[section]
\theoremstyle{definition}
\newtheorem{Remark}{Remark}[section]
\numberwithin{equation}{section}
\providecommand\@floatboxreset{}%
\newenvironment{breakablealgorithm}
  {%
    \refstepcounter{algocf}%
    \noindent\hrule height.8pt depth0pt \kern2pt%
    \renewcommand{\caption}[2][\relax]{%
      {\raggedright \textbf{Algorithm~\thealgocf} ##2\par}%
      \ifx\relax##1\relax
        \addcontentsline{loa}{algocf}{\protect\numberline{\thealgocf}##2}%
      \else
        \addcontentsline{loa}{algocf}{\protect\numberline{\thealgocf}##1}%
      \fi
      \kern2pt\hrule\kern2pt%
    }%
    \noindent
    \ignorespaces
  }
  {%
    \kern2pt\hrule\par
  }
\title{Stable Computation of Laplacian Eigenfunctions Corresponding to Clustered Eigenvalues}
\author{Ryoki Endo\thanks{Graduate School of Science and Technology, Niigata University, Niigata, Japan
(endo@m.sc.niigata-u.ac.jp).}, Xuefeng Liu\thanks{School of Arts and Sciences, Tokyo Woman's Christian University, Tokyo, Japan (xfliu@cis.twcu.ac.jp).}}
\begin{document}
\date{}
\maketitle

\begin{abstract}
The accurate computation of eigenfunctions corresponding to tightly clustered Laplacian eigenvalues remains an extremely difficult problem. In this paper, using the shape difference quotient of eigenvalues, we propose a stable computation method for the eigenfunctions of clustered eigenvalues caused by domain perturbation. 
\end{abstract}






\section{Introduction}\label{sec1}

Accurate computation of eigenfunctions for differential operators corresponding to tightly clustered eigenvalues remains a difficult task. Indeed, the error of an approximate eigenvector is governed by the size of the gap between its corresponding eigenvalue and the nearest other eigenvalue; see Davis–Kahan’s theorem discussed in \cite{davis1970rotation} and \cite[Sec.~11.7]{parlett1998symmetric} for strongly defined operators and \cite[Sec.~5]{LiuVej2022} for weakly defined operators. In solving practical matrix eigenvalue problems, one has to pay effort to address this instability; see, e.g., the mixed‐precision iterative‐refinement algorithm proposed by Ogita and Aishima \cite{Ogita2019}.

In this paper, we focus on the eigenvalue problems of differential operators and propose a stable algorithm to recover accurate eigenfunctions corresponding to clustered eigenvalues induced by domain perturbations. The following model eigenvalue problem of the Laplace operator on a bounded domain $\Omega$ will be considered, while the analysis can be extended to general differential operators:
\begin{equation}
\label{eq:eig-laplace}
    -\Delta u = \lambda u \quad \text{in }\Omega, 
    \qquad 
    u=0 \quad\text{on }\partial\Omega.
\end{equation}

The perturbation of a domain with symmetry can easily introduce closely clustered eigenvalues. As demonstrated in Example 1, for clustering eigenvalues, a straightforward discretization of the Laplacian eigenvalue problem generally produces completely incorrect approximations to eigenfunctions. As the main feature of our proposed method, it utilizes the first‐order variation of each eigenvalue with respect to the domain perturbation, thereby enabling the reconstruction of accurate eigenfunctions. Assuming that first‐order variations of eigenvalues remain separated, the proposed algorithm for eigenfunction computation is robust to both discretization and rounding errors, even when the perturbated eigenvalues are arbitrarily close.

\smallskip

\noindent\textbf{Example 1 (Failure case of eigenfunction computation).} 
Let us consider the computation of the second and third Dirichlet eigenfunctions of \eqref{eq:eig-laplace} defined on a rectangular domain $\Omega_\varepsilon := (0,1+\varepsilon)\times(0,1)$. The second and third eigenvalues are
\[
\lambda_2 \;=\; \frac{4\pi^2}{(1+\varepsilon)^2}=4\pi^2(1+O(\varepsilon)), 
\qquad
\lambda_3 \;=\; 4\pi^2.
\]
For a small value $\varepsilon = 10^{-4}$, the closely spaced eigenvalues $\lambda_2$ and $\lambda_3$ will introduce failure in the eigenfunction computation.

Figure~\ref{fig:analytical} displays the analytically computed eigenfunctions, where the second eigenfunction (left) is antisymmetric about the vertical line $x = (1 + \varepsilon)/2$ and the third eigenfunction (right) is antisymmetric about the horizontal line $y = 1/2$.
\begin{figure}[h]
    \centering
    \includegraphics[width=0.6\linewidth]{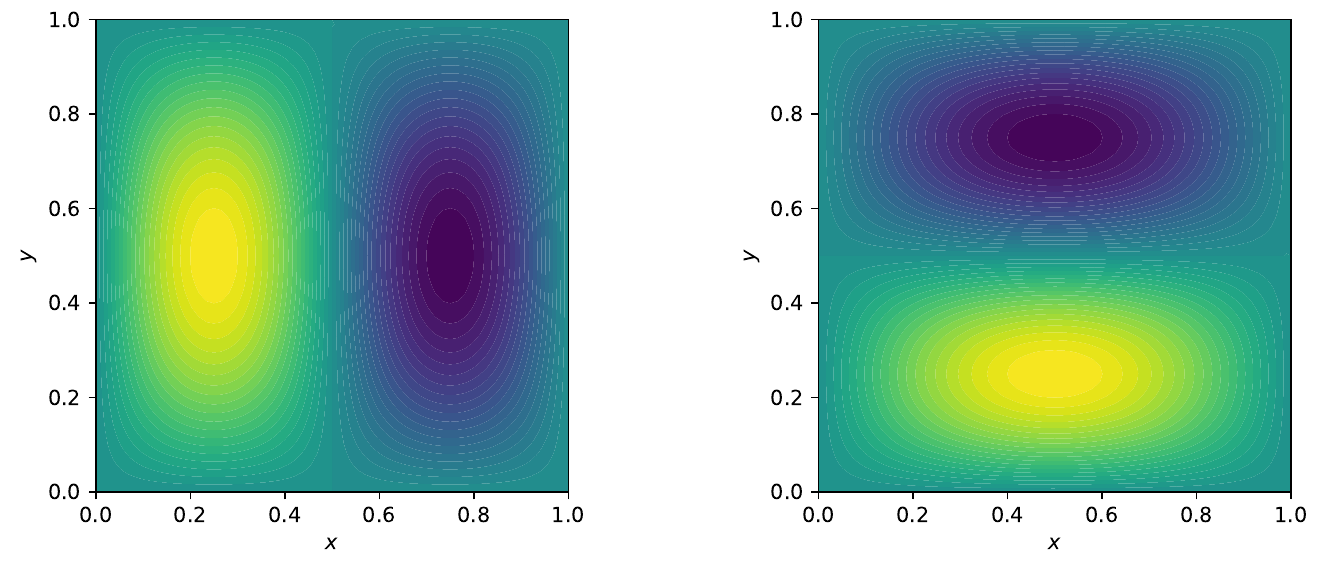}
    \caption{Analytically computed eigenfunctions on $\Omega_\varepsilon$ ($\varepsilon=10^{-4}$).}
    \label{fig:analytical}
\end{figure}



For clustered eigenvalues, a straightforward discretization of the Laplacian eigenvalue problem generally produces completely incorrect approximations to eigenfunctions. This instability is highlighted in Figure \ref{fig:fem_mesh_diagonal_comparison}, which demonstrates how numerically computed results can be highly sensitive to the mesh configuration.  For a simple rectangular domain, different triangulation methods—using "left" or "right" diagonals, or a symmetric "crossed" pattern (see Figure \ref{fig:fem_3_meshes}) —produce visibly inconsistent and distorted eigenfunctions. This demonstrates the fragility of standard methods, where results depend heavily on the mesh configuration.

\begin{figure}[h]
    \centering
    \includegraphics[width=0.65\textwidth]{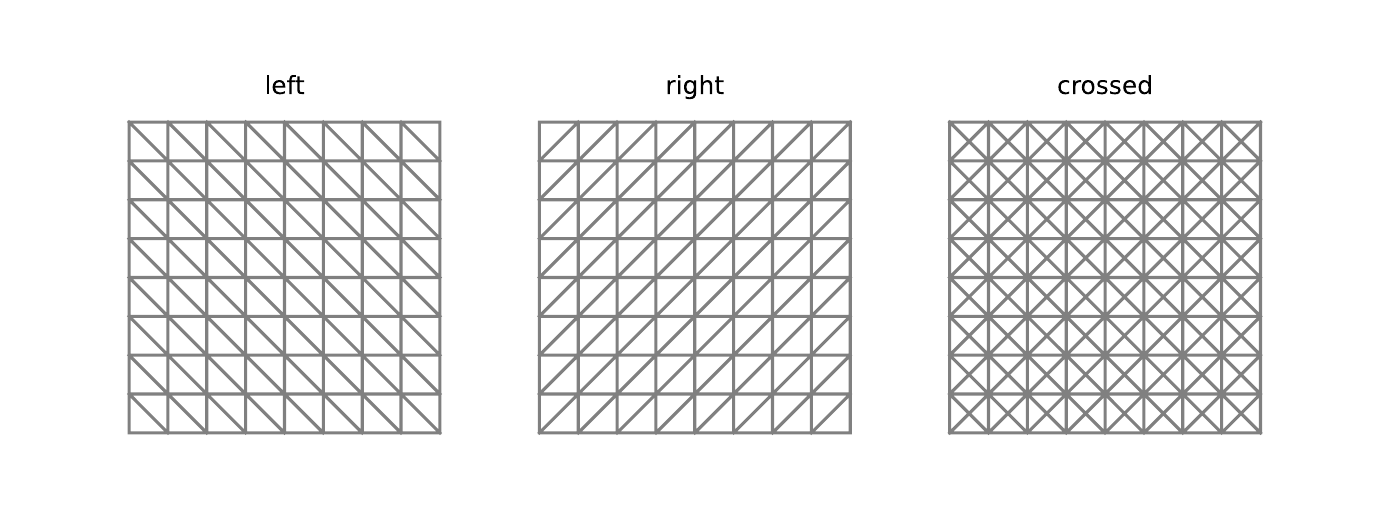}
    \caption{
        Three kinds of meshes for the rectangular domain.
    }
    \label{fig:fem_3_meshes}
\end{figure}

The configuration of this computation was:
\[
\text{Linear Lagrange FEM},\quad
\text{mesh size: }h=1/64,\quad
\#\text{DOF}=8\,192.
\]

\begin{figure}[h]
    \centering
    \includegraphics[width=0.75\textwidth]{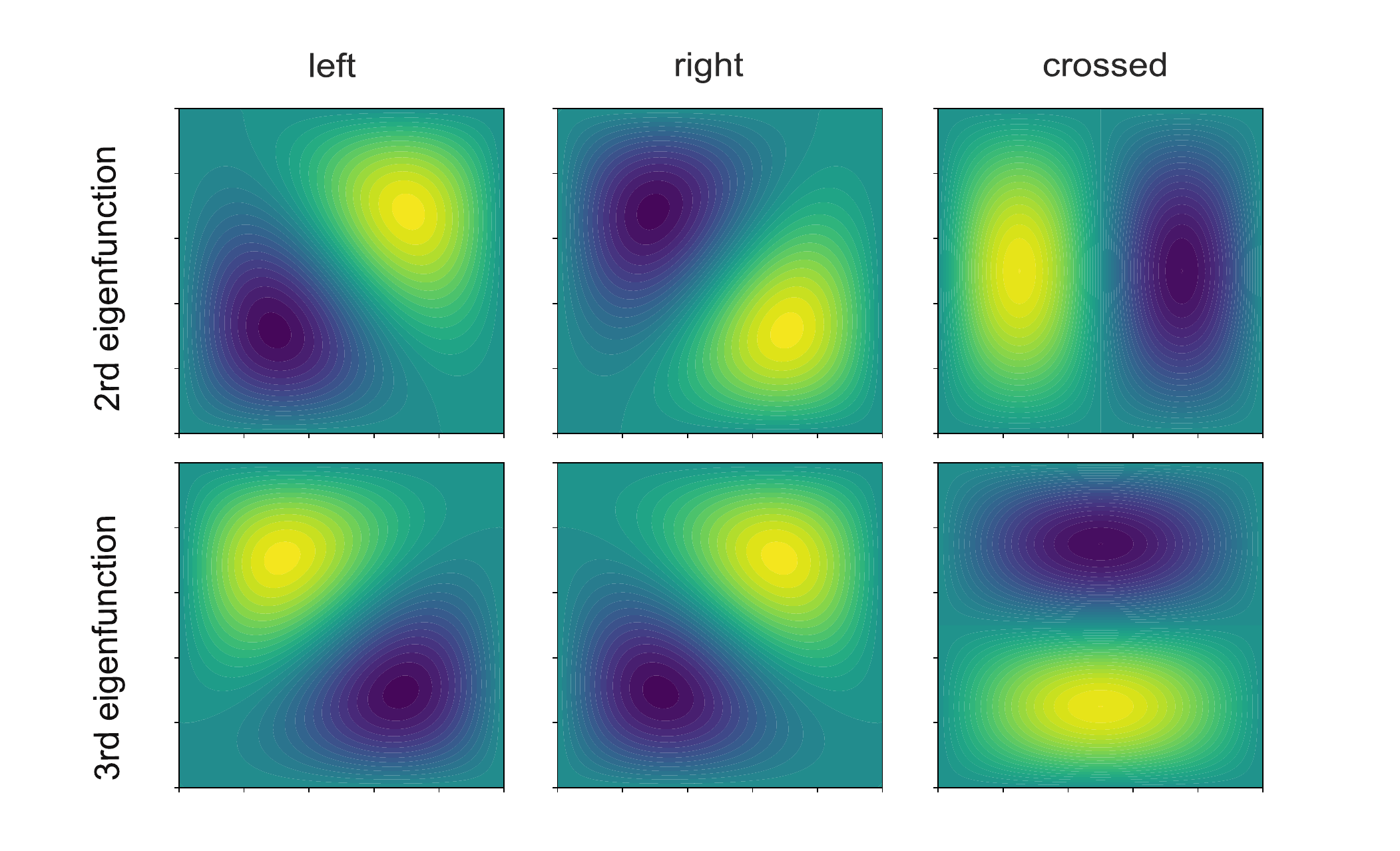}
    \caption{
        Comparison of the 2nd (top row) and 3rd (bottom row) eigenfunctions of the Laplacian, numerically computed on a perturbed rectangle $\Omega_\varepsilon$ with $\varepsilon = 10^{-4}$.
        Each column corresponds to a different triangulation method for the finite element mesh: \texttt{left}, \texttt{right}, and \texttt{crossed}.
    }
    \label{fig:fem_mesh_diagonal_comparison}
\end{figure}

This paper utilizes the shape difference quotient, informed by Rousselet’s directional derivative formula for multiple eigenvalues \cite{Rousselet}, to capture the variation of each eigenvalue corresponding to the domain perturbation. Note that the calculation of the shape difference quotient reduces to solving a small‐scale eigenvalue problem (see Theorem~\ref{lem:Fte-basis-eigen}), which is stable to obtain eigenvectors if the shape derivatives are well separated. Therefore, the use of a shape derivative allows us to transform the ill‐posed eigenfunction computation problem into a well‐posed one. As will be demonstrated in Section~\ref{sec:numerical_examples_rec}, even for an extremely small value $\varepsilon = 10^{-10}$, the proposed algorithm provides correct approximations to the target eigenfunctions.

\medskip

This paper is organized as follows. Section \ref{section:preliminary} provides preliminary definitions and the problem setting. In Section \ref{section:main-theories}, we derive the main theory for our proposed method, including the formulation via matrix eigenvalue problems. Section \ref{section:algorithm} presents the algorithm for the stabilized computation of Laplacian eigenfunctions based on these theories. We then demonstrate the effectiveness of our approach through two numerical examples: on a perturbated rectangular domain in Section \ref{sec:numerical_examples_rec}  and on a perturbated equilateral triangle in Section 6. Finally, Section \ref{sec:conclusion} offers concluding remarks and discusses future work.

\section{Preliminary}\label{section:preliminary}

Let $K \subset \mathbb{R}^2$ be a polygonal domain. We denote by $L^2(K)$ the space of real‐valued functions on $K$ that are square‐integrable. The Sobolev space $H^1(K)$ consists of those functions in $L^2(K)$ whose weak first‐order partial derivatives also lie in $L^2(K)$. Finally, we write $H^1_0(K)$ for the closed subspace of $H^1(K)$ enforcing homogeneous Dirichlet boundary conditions.

We write $\|v\|_{K}$ for the $L^2$‐norm of any $v\in L^2(K)$, and denote by $(\cdot,\cdot)_K$ the inner product on $L^2(K)$ or $(L^2(T))^2$. Since functions in $H^1_0(K)$ vanish on $\partial K$, the bilinear form 
$(\nabla \cdot, \nabla \cdot)_K$
defines an inner product on $H^1_0(K)$. 
With this notation, the variational formulation of the Laplacian eigenvalue problem \eqref{eq:eig-laplace} reads as
\begin{align}
\text{Find }u \in H^1_0(K)\setminus\{0\}\text{ and }\lambda > 0 \text{ such that }
(\nabla u,\nabla v)_K = \lambda\, (u,v)_K \quad \forall\,v \in H^1_0(K).
\label{eq:eigenvalue-problem}
\end{align}
Since the inverse Laplacian is a compact, self‐adjoint operator on $L^2(K)$, the spectral theorem guarantees that \eqref{eq:eigenvalue-problem} admits a sequence of eigenvalues
\[
0 < \lambda_1(K) < \lambda_2(K) \le \lambda_3(K) \le \cdots,
\]
each repeated according to its multiplicity, and an orthonormal basis of eigenfunctions in $L^2(K)$.

\section{Main theories}\label{section:main-theories}

Let $p = (x_1, x_2, \ldots, x_k, y_1, y_2, \ldots, y_k) \in \mathbb{R}^{2k}$, and denote by $K^p$ the $k$‐sided polygon whose vertices are given by $(x_i,y_i)$ for $i=1,\ldots,k$. For $t\ge 0$ and an $\ell^2$‐normalized vector $e\in\mathbb{R}^{2k}$, set $p_t := p + t\,e$. Thus $K^t := K^{p_t}$ is a perturbated polygonal domain for small $t\geq 0$ such that the edges of the polygon are not accrossing with each other.  Let 
\[
K^{t}_h \;:=\; T_{1,t} \cup T_{2,t} \cup \cdots \cup T_{m,t}
\]
be a triangulation of $K^t$ by disjoint triangles such that 
\[
i \neq j \quad\Longrightarrow\quad \mathrm{int}(T_{i,t}) \cap \mathrm{int}(T_{j,t}) = \emptyset.
\]
For each triangle $T_{j,0}\subset K^{0}(=K^p)$, let $S_{j,t}$ be the affine map, represented by a $2\times 2 $ matrix, carrying $T_{j,0}$ onto $T_{j,t}$ for $j=1,\dots,m$.  Then, one defines the global piecewise‐affine map 
\[
\Phi_t \;:\; K^0 \;\longrightarrow\; K^t, 
\qquad 
\Phi_t\big|_{T_{j,0}} = S_{j,t}\big|_{T_{j,0}},\; j=1,\dots,m.
\]

\medskip

Write $\lambda_i^t$ for the $i$‐th Dirichlet eigenvalue of the Laplacian on $K^t$.  Define the corresponding difference quotient at $p$ by
\begin{equation}\label{def:nabla-e-t}
D_t \lambda_i \;:=\; \frac{\lambda_i^t - \lambda_i^0}{t}.
\end{equation}
Through the paper, it is assumed that, for triangle domain $K$ with  parameter $p$, the eigenvalues $\lambda_n^p, \ldots, \lambda_N^p$ coincide:
\begin{equation}
\label{eq:basic_assumption_on_eigenvalue}
\lambda_n^p = \lambda_{n+1}^p = \cdots = \lambda_N^p \;=\; \lambda,    
\end{equation}
and let 
\[
E \;:=\; \mathrm{span}\{\,u_n^0,\dots,u_N^0\}
\]
denote the eigenspace on $K^0$ corresponding to $\lambda$.

For each $i=n,\dots,N$, choose an eigenpair $(\lambda_i^t,\,u_i^t)$ on $K^t$:
\[
-\,\Delta\,u_i^t \;=\; \lambda_i^t\,u_i^t \quad\text{in }K^t,\qquad
u_i^t\big|_{\partial K^t} = 0,
\]
and assume that $\{\,u_n^t,\dots,u_N^t\}$ are linearly independent.  Define the pull‐back
\[
\widetilde u_i^t \;:=\; u_i^t \circ \Phi_t 
\;\in\; H^1_0(K^0),
\qquad i = n,\dots,N.
\]
On each triangle $T_{j,0}\subset K^0$, one has
\[
\nabla\bigl(u_i^t\big|_{T_{j,t}}\bigr)
\;=\; S_{j,t}^{-\mathsf{T}} \;\nabla\bigl(\widetilde u_i^t\big|_{T_{j,0}}\bigr),
\quad j=1,\dots,m.
\]
Define, for each triangle index $j=1,\dots,m$,
\[
P_{j,t} \;:=\; \frac{S_{j,t}^{-1} S_{j,t}^{-\mathsf{T}} - I}{t}, 
\qquad 
d_{j,t} \;:=\; \frac{\lvert\det S_{j,t}\rvert - 1}{t}.
\]
Then introduce two bilinear forms on $H^1_0(K^0)$:

\begin{align}
\label{eq:def-tilde_at}
\widetilde a_t(u,v) 
&:= \sum_{j=1}^m \Bigl[ 
\lvert\det S_{j,t}\rvert\cdot\bigl(P_{j,t}\,\nabla u,\;\nabla v\bigr)_{L^2(T_{j,0})}\, 
\;\\
&~~+\; 
d_{j,t}\cdot\bigl(\nabla u,\nabla v\bigr)_{L^2(T_{j,0})} 
\;-\; 
\lambda\, d_{j,t} \cdot \bigl(u,v\bigr)_{L^2(T_{j,0})}\
\Bigr], \notag
\end{align}

\begin{align}
\widetilde b_t(u,v) 
:= \sum_{j=1}^m d_{j,t}\cdot\bigl(u,v\bigr)_{L^2(T_{j,0})}. 
\label{eq:def-tilde_bt}
\end{align}

\begin{Lemma}\label{lem:derivative-esimation-close-eigenvalues-prep-poly}
For each $i=n,\dots,N$, the pair $\bigl(D_t\lambda_i,\;\widetilde u_i^t\bigr)$ satisfies
\begin{equation}\label{eq:derivative-esimation-close-eigenvalues-prep-poly}
\widetilde a_t\bigl(\widetilde u_i^t,\;w\bigr)
\;=\;
D_t\lambda_i\;\widetilde b_t\bigl(\widetilde u_i^t,\;w\bigr)
\quad
\forall\,w\in E.
\end{equation}
In other words, $(D_t\lambda_i,\widetilde u_i^t)$ ($i=n,\cdots,N$) are the eigenpairs of the finite‐dimensional problem \eqref{eq:derivative-esimation-close-eigenvalues-prep-poly}.
\end{Lemma}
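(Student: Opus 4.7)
The plan is to pull the eigenvalue equation on $K^t$ back to the reference configuration $K^0$ via the piecewise‐affine map $\Phi_t$, and then exploit the fact that $w\in E$ is itself a $\lambda$‐eigenfunction on $K^0$ to cancel the leading‐order part of the resulting identity. Once the $O(1)$ contribution is absorbed, the remaining terms all carry a factor of $t$, and dividing through by $t$ reproduces the difference quotient $D_t\lambda_i = (\lambda_i^t - \lambda)/t$ on the right‐hand side, matching the three terms of $\widetilde a_t$ on the left.

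Step~1 (pulled‐back weak form). Start from $(\nabla u_i^t,\nabla v)_{K^t} = \lambda_i^t (u_i^t,v)_{K^t}$ for all $v\in H^1_0(K^t)$ and test against $v := w\circ \Phi_t^{-1}$, with $w\in E$ arbitrary. Because $\Phi_t$ is a bi-Lipschitz homeomorphism sending $\partial K^0$ onto $\partial K^t$, this test function belongs to $H^1_0(K^t)$. On each triangle $T_{j,t}$, $\Phi_t|_{T_{j,0}} = S_{j,t}$ is affine, so the standard change of variables yields
\[
(\nabla u_i^t,\nabla v)_{T_{j,t}} = |\det S_{j,t}|\,(S_{j,t}^{-1}S_{j,t}^{-\mathsf T}\nabla\widetilde u_i^t,\nabla w)_{T_{j,0}},\qquad (u_i^t,v)_{T_{j,t}} = |\det S_{j,t}|\,(\widetilde u_i^t,w)_{T_{j,0}}.
\]
Summing over $j$ gives the pulled‐back identity
\[
\sum_{j=1}^m |\det S_{j,t}|\,(S_{j,t}^{-1}S_{j,t}^{-\mathsf T}\nabla\widetilde u_i^t,\nabla w)_{T_{j,0}} \;=\; \lambda_i^t\sum_{j=1}^m |\det S_{j,t}|\,(\widetilde u_i^t,w)_{T_{j,0}}.
\]

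Step~2 (isolate the perturbation). Substitute $S_{j,t}^{-1}S_{j,t}^{-\mathsf T} = I + tP_{j,t}$ and $|\det S_{j,t}| = 1 + td_{j,t}$ and expand. The ``zeroth order'' pieces produce $(\nabla\widetilde u_i^t,\nabla w)_{K^0}$ on the left and a multiple of $(\widetilde u_i^t,w)_{K^0}$ on the right; all other terms carry a factor of $t$ (with the convection $d_{j,t}(P_{j,t}\cdot,\cdot)$‐type remainders regrouped using $|\det S_{j,t}|$). Step~3 (cancel the $O(1)$ contribution). Because $w\in E$ satisfies $(\nabla w,\nabla\cdot)_{K^0} = \lambda(w,\cdot)_{K^0}$ on $H^1_0(K^0)$, substitute $(\nabla\widetilde u_i^t,\nabla w)_{K^0} = \lambda(\widetilde u_i^t,w)_{K^0}$. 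This eliminates the unperturbed term on the left so that every surviving quantity is proportional to $t$. Step~4 (divide and identify). Dividing the remaining identity by $t$ turns $(\lambda_i^t-\lambda)/t$ into $D_t\lambda_i$, and the three surviving left‐hand sums are precisely the three summands that define $\widetilde a_t(\widetilde u_i^t,w)$, while the right‐hand sum is $\widetilde b_t(\widetilde u_i^t,w)$ (up to a regrouping via $|\det S_{j,t}| = 1+td_{j,t}$).

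The algebra itself is routine; the only genuinely delicate point is that one must justify $v = w\circ\Phi_t^{-1}\in H^1_0(K^t)$ and the corresponding piecewise change of variables, which requires the non‐degeneracy of the perturbed polygon $K^t$ so that each $S_{j,t}$ is invertible with $|\det S_{j,t}|>0$. I expect the main ``obstacle'' to be purely bookkeeping: carefully tracking the factors $1+td_{j,t}$ in the right‐hand sum so that, after subtracting the $w$‐eigenvalue equation, the terms reassemble exactly into the forms $\widetilde a_t$ and $\widetilde b_t$ as defined in \eqref{eq:def-tilde_at}–\eqref{eq:def-tilde_bt}; once this grouping is carried out, the claim follows with no further analysis.
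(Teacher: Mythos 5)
Your proposal is correct and follows essentially the same route as the paper: pull the weak form on $K^t$ back through $\Phi_t$, subtract the eigenvalue identity for $w\in E$ tested against $\widetilde u_i^t$, divide by $t$, and regroup using $S_{j,t}^{-1}S_{j,t}^{-\mathsf T}=I+tP_{j,t}$ and $|\det S_{j,t}|=1+t\,d_{j,t}$ (the paper does the identical algebra by adding and subtracting intermediate terms). One point your phrase ``up to a regrouping'' quietly passes over: the weight that actually emerges on the right-hand side is $|\det S_{j,t}|=1+t\,d_{j,t}$ rather than $d_{j,t}$, which agrees with the paper's own final computation but not with the literal definition of $\widetilde b_t$ in \eqref{eq:def-tilde_bt} --- an apparent typo in the paper rather than a flaw in your argument.
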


\begin{proof}
On the perturbated domain $K^t$, each $u_i^t$ satisfies 
\[
\int_{K^t} \nabla u_i^t\cdot \nabla v\,dx 
\;=\; 
\lambda_i^t \int_{K^t} u_i^t\,v\,dx
\quad 
\forall\,v\in H^1_0(K^t).
\]
Pulling back via $v = w\circ \Phi_t^{-1} \in H_0^1(K^t)$, with $w\in H^1_0(K^0)$, and using the piecewise‐affine map $\Phi_t$, one obtains for each fixed $i$:
\begin{equation}\label{eq:perturbated-substitute-u-Kt-v}
    \sum_{j=1}^m 
\int_{T_{j,0}}
\bigl(S_{j,t}^{-\mathsf{T}} \,\nabla \widetilde u_i^t \bigr)\cdot 
\bigl(S_{j,t}^{-\mathsf{T}} \,\nabla w \bigr)
\,\bigl|\det S_{j,t}\bigr|
~dx\;=\;
\lambda_i^t \sum_{j=1}^m 
\int_{T_{j,0}} \widetilde u_i^t\,w \,\bigl|\det S_{j,t}\bigr|~dx.
\end{equation}
Meanwhile, since for arbitrary $u\in E(\subset H^1_0(K^0))$, we have
\begin{equation}\label{eq:variational-formula-poly-v}
\int_{K^0} \nabla u\cdot \nabla v\,dx 
=\; 
\lambda \int_{K^0} u\,v\,dx, 
\quad \forall\,v\in H^1_0(K^0).
\end{equation}


Take $w :=u$ in \eqref{eq:perturbated-substitute-u-Kt-v} and $v:=\widetilde{u}_i^t$ in \eqref{eq:variational-formula-poly-v}. 
Then, we obtain
\begin{equation}\label{eq:perturbated-substitute-u-Kt}
    \sum_{j=1}^m 
\int_{T_{j,0}}
\bigl(S_{j,t}^{-\mathsf{T}} \,\nabla \widetilde u_i^t \bigr)\cdot 
\bigl(S_{j,t}^{-\mathsf{T}} \,\nabla u \bigr)
\,\bigl|\det S_{j,t}\bigr|
~dx\;=\;
\lambda_i^t \sum_{j=1}^m 
\int_{T_{j,0}} \widetilde u_i^t\,u \,\bigl|\det S_{j,t}\bigr|~dx
\end{equation}
and
\begin{equation}\label{eq:variational-formula-poly}
\int_{K^0} \nabla u\cdot \nabla \widetilde{u}_i^t\,dx 
=\; 
\lambda \int_{K^0} u\,\widetilde{u}_i^t\,dx, 
\quad \forall\,\widetilde{u}_i^t\in H^1_0(K^0).
\end{equation}

Subtracting  \eqref{eq:variational-formula-poly} from  \eqref{eq:perturbated-substitute-u-Kt} and dividing the result by $t$ yields the following expression on the left-hand side:
\begin{align}
    \begin{split}\label{eq:substract-left}
    \sum_{j=1}^{m}\frac{1}{t}&\left\{\left(\left(S_{j,t}^{-1}\right)\left(S_{j,t}^{-T}\right)\nabla\tilde u_i^t,\nabla u\right)_{T_{j,0}}\cdot|\det S_{j,t}|-\left(\nabla\tilde u_i^t,\nabla u\right)_{T_{j,0}}\right\}\\
    &=
    \sum_{j=1}^{m}\frac{1}{t}\left\{\left(\left(S_{j,t}^{-1}\right)\left(S_{j,t}^{-T}\right)\nabla\tilde u_i^t,\nabla u\right)_{T_{j,0}}\cdot|\det S_{j,t}|-\left(\nabla\tilde u_i^t,\nabla u\right)_{T_{j,0}}\cdot|\det S_{j,t}|\right\}\\
    &~~+
    \sum_{j=1}^{m}\frac{1}{t}\left\{\left(\nabla\tilde u_i^t,\nabla u\right)_{T_{j,0}}\cdot|\det S_{j,t}|-\left(\nabla\tilde u_i^t,\nabla u\right)_{T_{j,0}}\right\}\\
    &=
    \sum_{j=1}^{m}\left[\left(P_{j,t}\nabla\tilde u_i^t,\nabla u\right)_{T_{j,0}}\cdot|\det S_{j,t}|+\left(\nabla\tilde u_i^t,\nabla u\right)_{T_{j,0}}\cdot d_{j,t}\right].
    \end{split}
\end{align}
The expression on the right-hand side becomes
\begin{align}
    \begin{split}\label{eq:substract-right}
    \sum_{j=1}^{m}\frac{1}{t}&\left\{\lambda^t_i(\tilde u_i^t,u)_{T_{j,0}}\cdot|\det S_{j,t}|-\lambda(\tilde u_i^t,u)_{T_{j,0}}\right\}\\
    &=
    \sum_{j=1}^{m}\frac{1}{t}\bigr\{\lambda^t_i(\tilde u_i^t,u)_{T_{j,0}}\cdot|\det S_{j,t}|-\lambda(\tilde u_i^t,u)_{T_{j,0}}\cdot|\det S_{j,t}|\\
    &\;\;\;\;\;\;\;\;\;\;\;\;\;\;+\lambda(\tilde u_i^t,u)_{T_{j,0}}\cdot|\det S_{j,t}|-\lambda(\tilde u_i^t,u)_{T_{j,0}}\bigr\}\\
    &=
    \frac{\lambda^t_i–\lambda}{t}\sum_{j=1}^{m}\left\{(\tilde u_i^t,u)_{T_{j,0}}\cdot|\det S_{j,t}|\right\}+\sum_{j=1}^{m}\left\{\lambda(\tilde u_i^t,u)_{T_{j,0}}\cdot \frac{1}{t}\left(|\det S_{j,t}|-1\right)\right\}\\
    \end{split}
\end{align}
Combining \eqref{eq:substract-left} with \eqref{eq:substract-right} yields
\begin{equation}
    \tilde a_t(\tilde u_{i}^t,u)
    =
    D_t\lambda_i\cdot
    \tilde b_t(\tilde u_{i}^t,u).
\end{equation}

\end{proof}

\begin{Remark}
The computation method of
``shape difference quotient'' was proposed in our early paper \cite{arxiv.2305.14063}, where the discussion is mainly focusing on a triangle domain.
In this paper, our approach models the perturbation of polygonal domains via piecewise-affine maps, using the shape difference quotient to transform the problem into a small, algebraic 
eigenvalue problem. This contrasts with classical methods that rely on smooth velocity fields and analytical boundary-integral formulas (e.g., Hadamard's formula). Our technique is therefore highly compatible with the finite element method and computationally advantageous for the perturbations to polygonal domains.

\end{Remark}

Assume that $\mbox{dim}(\widetilde{E}_t)=N-n+1$, where $\widetilde E_t := \mathrm{span}\{\widetilde u_n^t,\dots,\widetilde u_N^t\}$. Such an assumption is expected to hold for a general purturbation and easy to validate for concrete cases.
Because $E$ and $\widetilde E_t $ have the same (finite) dimension, one may choose bases of each and obtain a finite‐dimensional matrix formulation:

\begin{The}\label{lem:Fte-basis-eigen}
Suppose $\dim \widetilde E_t = \dim E =: M = N-n+1$.  Let 
\[
\{\,\widetilde\phi_i\,\}_{i=n}^N\quad\text{and}\quad 
\{\,\phi_j\,\}_{j=n}^N
\]
be arbitrary bases of $\widetilde E_t$ and $E$, respectively.  Define the two $M\times M$ matrices
\begin{equation}\label{eq:def-Mt-Nt}
(M_t)_{ij} \;:=\; \widetilde a_t\bigl(\widetilde\phi_i,\,\phi_j\bigr),
\qquad
(N_t)_{ij} \;:=\; \widetilde b_t\bigl(\widetilde\phi_i,\,\phi_j\bigr),
\quad 
i,j=n,\dots,N.
\end{equation}
Then each difference quotient $D_t\lambda_i$ appears as the $(i-n+1)$‐th eigenvalue of the generalized matrix eigenproblem
\begin{equation}\label{eq:Mt-N-eig-prob}
M_t\,\sigma \;=\; \mu\,N_t\,\sigma.
\end{equation}
Moreover, if $\sigma_i = (s_{n,i},\dots,s_{N,i})^{\mathsf{T}}$ is an eigenvector corresponding to the $(i-n+1)$‐th eigenvalue of \eqref{eq:Mt-N-eig-prob}, then 
\[
\widetilde u_i^t 
= \sum_{k=n}^N s_{k,i}\,\widetilde\phi_k 
\;\in\;\widetilde E_t,\quad 
i=n,\dots,N,
\]
and hence the stabilized eigenfunction on the perturbated domain $K^t$ is 
\[
u_i^t 
= \widetilde u_i^t \circ \Phi_t^{-1}
= \sum_{k=n}^N s_{k,i}\,\bigl(\widetilde\phi_k \circ \Phi_t^{-1}\bigr),
\quad i=n,\dots,N.
\]
\end{The}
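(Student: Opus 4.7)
The plan is to reduce the continuous variational identity of Lemma \ref{lem:derivative-esimation-close-eigenvalues-prep-poly} to a finite-dimensional matrix equation by expanding the unknowns in the given bases.  Since $\widetilde u_i^t \in \widetilde E_t$, I would first write
\[
\widetilde u_i^t \;=\; \sum_{k=n}^{N} s_{k,i}\,\widetilde\phi_k,\qquad \sigma_i := (s_{n,i},\dots,s_{N,i})^{\mathsf{T}},
\]
with coefficients uniquely determined since $\{\widetilde\phi_k\}$ is a basis.  Inserting this expansion into \eqref{eq:derivative-esimation-close-eigenvalues-prep-poly}, choosing the test function $w = \phi_j$ for each $j=n,\dots,N$, and using the bilinearity of $\widetilde a_t$ and $\widetilde b_t$ produces
\[
\sum_{k=n}^{N} s_{k,i}\,\widetilde a_t(\widetilde\phi_k,\phi_j) \;=\; D_t\lambda_i \sum_{k=n}^{N} s_{k,i}\,\widetilde b_t(\widetilde\phi_k,\phi_j), \qquad j=n,\dots,N,
\]
which is precisely the generalized eigenproblem \eqref{eq:Mt-N-eig-prob} (modulo a transpose convention on the matrices $M_t$, $N_t$, which yields the same spectrum).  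Conversely, since the $\phi_j$ span $E$, any $\sigma \in \mathbb{R}^M$ satisfying the matrix equation produces via $\sum_k s_k\widetilde\phi_k$ a function in $\widetilde E_t$ satisfying \eqref{eq:derivative-esimation-close-eigenvalues-prep-poly}, so the two formulations are equivalent.

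Second, I would verify that the $M = N-n+1$ pairs $(D_t\lambda_i,\sigma_i)$ obtained from $i=n,\dots,N$ actually exhaust the spectrum of the $M \times M$ problem.  The assumption $\dim\widetilde E_t = M$ together with the linear independence of $\{\widetilde u_n^t,\dots,\widetilde u_N^t\}$ implies that the coefficient vectors $\sigma_n,\dots,\sigma_N$ are linearly independent in $\mathbb{R}^M$, hence form a complete set of eigenvectors of \eqref{eq:Mt-N-eig-prob}.  The ordering statement — that $D_t\lambda_i$ is the $(i-n+1)$-th eigenvalue — then follows because the physical eigenvalues satisfy $\lambda_n^t \le \lambda_{n+1}^t \le \cdots \le \lambda_N^t$ and share the common base value $\lambda_i^0 = \lambda$, so the difference quotients $D_t\lambda_i = (\lambda_i^t-\lambda)/t$ inherit the same ordering.

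Finally, the reconstruction formula for $u_i^t$ is immediate from the definition of the pull-back $\widetilde u_i^t = u_i^t \circ \Phi_t$: composing the expansion of $\widetilde u_i^t$ with $\Phi_t^{-1}$ termwise gives $u_i^t = \sum_{k=n}^N s_{k,i}(\widetilde\phi_k\circ \Phi_t^{-1})$, provided $\Phi_t$ is a bijection between $K^0$ and $K^t$, which holds for sufficiently small $t$.

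The main obstacle is not analytical but notational: one must be careful that the two indices in $(M_t)_{ij} = \widetilde a_t(\widetilde\phi_i,\phi_j)$ refer respectively to the trial and test spaces, so that substituting $\widetilde u_i^t = \sum s_{k,i}\widetilde\phi_k$ and testing against $\phi_j$ leads to the correct (possibly transposed) matrix system declared in \eqref{eq:Mt-N-eig-prob}.  Beyond this bookkeeping, the argument is a routine application of Lemma \ref{lem:derivative-esimation-close-eigenvalues-prep-poly} together with the dimension assumption $\dim\widetilde E_t = \dim E = M$.
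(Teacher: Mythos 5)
Your proposal is correct and follows essentially the same route as the paper's proof: expand $\widetilde u_i^t$ in the basis $\{\widetilde\phi_k\}$, test the identity of Lemma~\ref{lem:derivative-esimation-close-eigenvalues-prep-poly} against each $\phi_j$, and read off the generalized matrix eigenproblem, with the ordering and reconstruction following from linear independence of the $\sigma_i$ and the bijectivity of $\Phi_t$. In fact you are slightly more careful than the paper, which silently ignores the transpose issue (with $(M_t)_{ij}=\widetilde a_t(\widetilde\phi_i,\phi_j)$ the coefficient vectors actually satisfy $M_t^{\mathsf{T}}\sigma_i=D_t\lambda_i\,N_t^{\mathsf{T}}\sigma_i$) and justifies the ascending-order claim only by the independence of the $\sigma_i$, whereas your argument via the ordering of $\lambda_i^t$ and the common base value $\lambda$ is the cleaner reason.
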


\begin{proof}
Since for each $i,j=n,\dots,N$,
\[
\widetilde a_t(\widetilde u_i^t,\,\phi_j) 
= D_t\lambda_i \;\widetilde b_t(\widetilde u_i^t,\,\phi_j),
\]
and because each $\widetilde u_i^t$ admits a unique expansion 
\[
\widetilde u_i^t 
= \sum_{k=n}^N s_{k,i}\,\widetilde\phi_k, 
\]
substituting into the variational identity and collecting coefficients immediately yields
\[
M_t\,\sigma_i \;=\;(D_t\lambda_i)\;N_t\,\sigma_i, 
\quad 
\sigma_i = (s_{n,i},\dots,s_{N,i})^{\mathsf{T}}.
\]
Since the $M$ eigenvectors $\{\sigma_i\}$ are independent, their associated eigenvalues $D_t\lambda_i$ appear in ascending order along the generalized spectrum of \eqref{eq:Mt-N-eig-prob}.  This completes the proof.
\end{proof}

\section{Algorithm}\label{section:algorithm}

Using Theorem~\ref{lem:Fte-basis-eigen}, we now present a practical algorithm to stably compute the eigenfunctions associated with clustered eigenvalues. The only assumption is that the difference quotients $D_t\lambda_i$ remain sufficiently separated, so that the small matrix eigenproblem \eqref{eq:Mt-N-eig-prob} does not itself have clustered eigenvalues.

\begin{breakablealgorithm}
    \label{alg:stabilized_computation} 
     \caption{}
    \KwData{
        \begin{itemize}
            \item Unperturbated polygonal domain $K^0 \subset \mathbb{R}^2$.
            \item Theoretically multiple Diricglet eigenvalues $\lambda_n^0, \dots, \lambda_N^0$ on $K^0$.
            \item Perturbation parameter $t > 0$ (small) and perturbation direction $e \in \mathbb{R}^{2k}$ defining the perturbated domain $K^t = K^{p_t}$ where $p_t = p_0 + te$.
        \end{itemize}
    }
    \KwResult{Stabilized approximate eigenfunctions $u_i^t$ of $\lambda_i^t$ corresponding to clustered Dirichlet eigenvalues $\lambda_n^t, \dots, \lambda_N^t$..}
    
    \BlankLine 
    
    \textbf{Preprocessing:}\\ 
        Define the triangulation $K^0 = \bigcup_{j=1}^m T_{j,0}$.\\
        For each triangle $T_{j,0}$ in $K^0$, determine the linear transformation $S_{j,t}$ that maps $T_{j,0}$ to the corresponding triangle $T_{j,t}$ in $K^t$. This defines the global transformation $\Phi_t: K^0 \to K^t$ such that $\Phi_t|_{T_{j,0}} = S_{j,t}|_{T_{j,0}}$.
    
    \BlankLine
    \textbf{Step 1: Solve Eigenproblem on perturbated Domain $K^0$}\\
        Use FEM to solve the Dirichlet eigenvalue problem $-\Delta u^0 = \lambda^0 u^0$ on $K^0$.\\
        Obtain a basis of the eigenspace $E \subset H_0^1(K^0)$ corresponding to $\lambda_n^0, \dots, \lambda_N^0$, denoted by $\{\phi_k\}_{k=n}^N$.\\

    \BlankLine
    \textbf{Step 2: Solve Eigenproblem on perturbated Domain $K^t$}\\
    Use FRM to solve the Dirichlet eigenvalue problem $-\Delta u^t = \lambda^t u^t$ on $K^t$, and obtain inaccurate eigenfunctions $\phi_n,\cdots,\phi_N$ corresponding to $\lambda_n^t, \dots, \lambda_N^t$.\\
    Let $\widetilde{E}_t:=\mbox{span}\{\tilde{\phi}_n,\cdots,\tilde{\phi}_N\}(\subset H_0^1(K^0))$.
    \BlankLine
    \textbf{Step 3: Construct Matrices $M_t$ and $N_t$}\\
        For $i,j = n, \dots, N$, compute the elements of matrices $M_t$ and $N_t$ of size $M \times M$:
        \begin{align*}
            (M_t)_{ij} := \tilde{a}_t(\tilde{\phi}_i, \phi_j) ,~~
            (N_t)_{ij} := \tilde{b}_t(\tilde{\phi}_i, \phi_j).
        \end{align*}

    \BlankLine
    \textbf{Step 4: Solve Generalized Matrix Eigenvalue Problem}\\
        Solve the $M \times M$ matrix eigenvalue problem $M_t \sigma = \mu N_t \sigma$ for eigenvalues $\mu_k$ and corresponding eigenvectors $\sigma_k$ for $k=n, \dots, N$.\\
        Order the eigenvalues $\mu_{(n)} \le \mu_{(n+1)} \le \dots \le \mu_{(N)}$. Let $\sigma_{(k)}$ be the eigenvector corresponding to $\mu_{(k)}$. These $\mu_{(i)}$ are the difference quotients $D_t\lambda_i$.

    \BlankLine
    \textbf{Step 5: Construct Stabilized Eigenfunctions $\tilde{u}_i^t$}\\
        For each $i = n, \dots, N$, let $\sigma_{(i)} = (s_{n,i}, s_{n+1,i}, \dots, s_{N,i})^T$ be the eigenvector obtained in Step 4 corresponding to $\mu_{(i)}$.\\
        The stabilized eigenfunction $u_i^t$ on $K^t$ is constructed as:
        $$ u_i^t := \sum_{k=n}^{N} s_{k,i} (\tilde{\phi}_k\circ \Phi_t^{-1}).$$
\end{breakablealgorithm}

\section{Numerical Example on a Rectangle}\label{sec:numerical_examples_rec}

This section numerically verifies the proposed algorithm on a perturbated rectangular domain 
\[
\Omega_{\varepsilon} = (0,1+\varepsilon)\times(0,1),
\]
with clustered eigenvalues $\{\lambda_2,\lambda_3\}$, focusing on antisymmetry preservation of the second $u_2$ and third $u_3$ eigenfunctions as $\varepsilon$ varies.

\subsection{Experimental Setup and Metrics}

Computations were performed using {\tt FEniCS} (v2019.1.0) with $P_1$ Lagrange elements on a fixed uniform mesh of size $h=1/64$. We investigated perturbation magnitudes $\varepsilon\in\{10^{-1},\,10^{-5},\,10^{-10}\}$, targeting the clustered pair $\{\lambda_2,\lambda_3\}$. Define the antisymmetry measure
\[
A_i \;:=\; \frac{\|\,u_i + u_i^\ast\|_{L^2(\Omega)}}{\|u_i\|_{L^2(\Omega)}}, 
\quad i=2,3,
\]
where $u_i^\ast$ is the reflection of $u_i$ about the theoretical axis (vertical line $x=(1+\varepsilon)/2$ for $u_2$, horizontal line $y=1/2$ for $u_3$). Perfect antisymmetry corresponds to $A_i=0$.

\subsection{Results and Discussion}

Table~\ref{tab:symmetry_results_varied_eps} presents the antisymmetry measures for $u_2$ and $u_3$ on $\Omega_{\varepsilon}$ with $h=1/64$, for each $\varepsilon$. As $\varepsilon$ decreases (tighter eigenvalue clustering), the standard FEM fails to preserve the theoretical antisymmetries (large $A_i$). In contrast, our proposed algorithm maintains $A_i\approx 7\times10^{-4}$ across all $\varepsilon$, demonstrating robustness under severe clustering.

\begin{table}[h]
  \centering
  \caption{Antisymmetry measures ($A_i$) and eigenvalue difference quotients ($D_\varepsilon\lambda_i$) for $u_2$ and $u_3$ on $\Omega_{\varepsilon}$ with $h=1/64$. Lower $A_i$ is better.}
  \label{tab:symmetry_results_varied_eps}
  \small
  \setlength{\tabcolsep}{4pt}
  \renewcommand{\arraystretch}{1.1}
  \begin{tabular}{@{}l l rr ccc@{}}
    \toprule
    $\varepsilon$ & $\lambda_3-\lambda_2$  & $D_\varepsilon\lambda_2$ & $D_\varepsilon\lambda_3$ & Method & $A_2$ & $A_3$ \\
    \midrule
    \multirow{2}{*}{$10^{-1}$} &
    \multirow{2}{*}{$6.85$} &
    \multirow{2}{*}{$-75.44$} &
    \multirow{2}{*}{$-18.86$}
      & Standard FEM      & $0.0044$  & $0.0049$  \\
      & & & & Proposed Alg.     & $\mathbf{0.0007}$  & $\mathbf{0.0007}$   \\
    \midrule
    \multirow{2}{*}{$10^{-5}$} &
    \multirow{2}{*}{$8.00\times 10^{-4}$} &
    \multirow{2}{*}{$-79.03$} &
    \multirow{2}{*}{$-19.76$}
      & Standard FEM      & $1.3993$  & $1.3997$  \\
      & & & & Proposed Alg.     & $\mathbf{0.0007}$  & $\mathbf{0.0007}$   \\
    \midrule
    \multirow{2}{*}{$10^{-10}$} &
    \multirow{2}{*}{$7.90\times 10^{-9}$} &
    \multirow{2}{*}{$-79.03$} &
    \multirow{2}{*}{$-19.76$}
      & Standard FEM      & $1.4140$  & $1.4144$  \\
      & & & & Proposed Alg.     & $\mathbf{0.0007}$  & $\mathbf{0.0007}$   \\
    \bottomrule
  \end{tabular}
\end{table}

\begin{figure}[!htbp]
  \centering
  \begin{subfigure}[b]{0.43\textwidth}
    \centering
    \includegraphics[width=\textwidth]{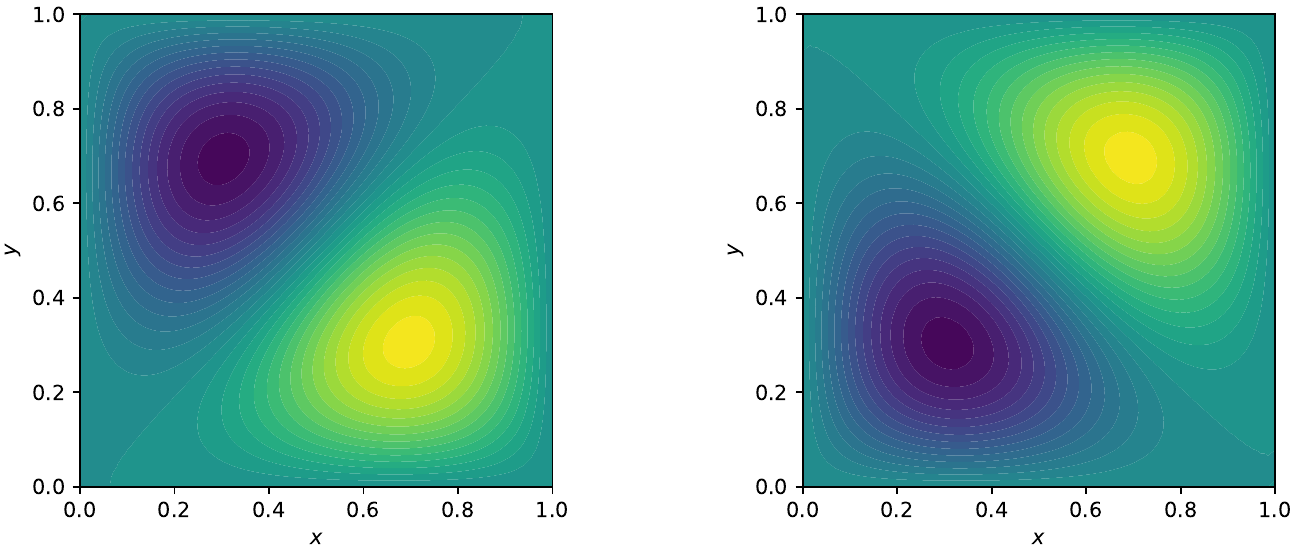}
    \caption{Eigenfunctions ($u_2$ left, $u_3$ right) via standard FEM.}
    \label{fig:fem_eigenfuncs_representative_ep5}
  \end{subfigure}
  \hfill
  \begin{subfigure}[b]{0.43\textwidth}
    \centering
    \includegraphics[width=\textwidth]{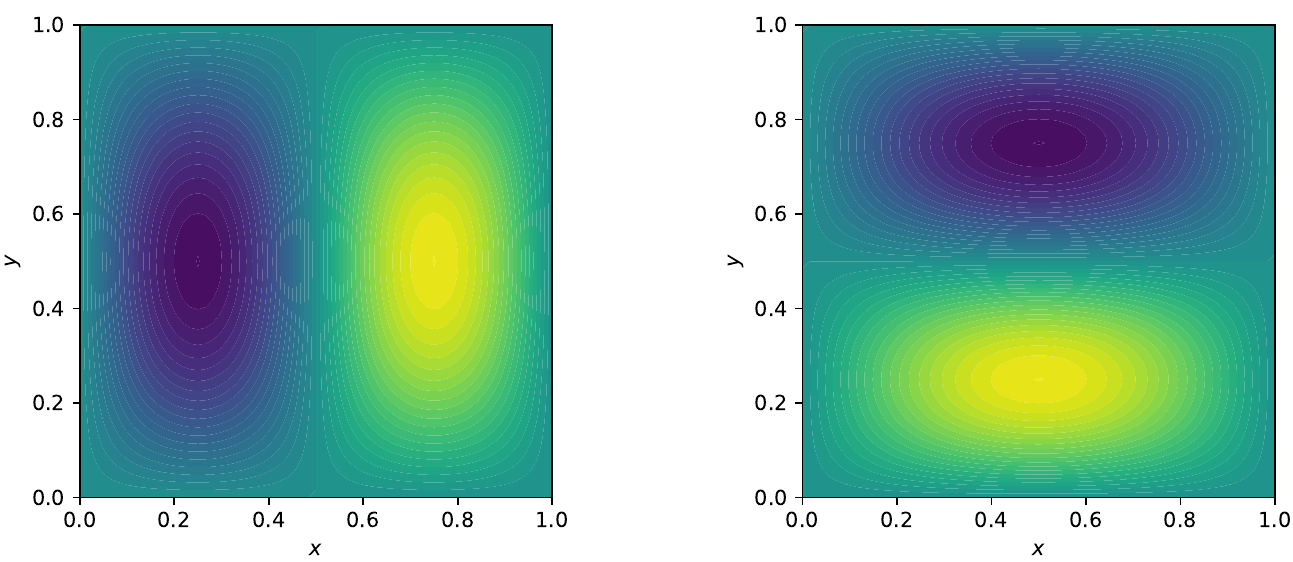}
    \caption{Eigenfunctions ($u_2$ left, $u_3$ right) via the proposed algorithm.}
    \label{fig:proposed_eigenfuncs_representative_ep5}
  \end{subfigure}

  \caption{\label{fig:fem_eigenfuncs_square} Eigenfunctions on rectangles for $\varepsilon=10^{-5}$.}
  \label{fig:comparison_eigenfuncs_ep5}
\end{figure}


Figure~\ref{fig:fem_eigenfuncs_square}(A) shows the second and third eigenfunctions computed by standard FEM on $\Omega(10^{-5})$, exhibiting severe loss of antisymmetry. In contrast, Figure~\ref{fig:fem_eigenfuncs_square}(B) displays the stabilized eigenfunctions obtained by the proposed algorithm on the same domain, where antisymmetry is restored.

\section{Numerical Example on a Perturbated Equilateral Triangle}

In this section, we apply our proposed algorithm to another classic problem involving eigenvalue clustering: the Laplace–Dirichlet problem on a nearly equilateral triangle.

It is well-known that for a perfect equilateral triangle, the second and third Dirichlet eigenvalues are degenerate \cite{mccartin2003eigenstructure}. We consider a triangle \(T(s,t)\) with the vertices \((0,0)\), \((1,0)\), and \((s,t)\), where the equilateral case corresponds to \((s,t) = \bigl(\tfrac{1}{2},\,\tfrac{\sqrt{3}}{2}\bigr)\).

 The multiplicity of the second eigenvalue over the equilateral triangle is a direct result of the symmetry of domain.
 By introducing a very small perturbation to a vertex, we break this symmetry and cause the single eigenvalue to split into a pair of tightly clustered eigenvalues. Standard numerical methods typically fail to resolve the corresponding eigenfunctions, often producing an arbitrary and unstable linear combination of the two modes. It is demonstrated that our algorithm can stably compute these eigenfunctions even with an extremely small perturbation of \(\epsilon = 10^{-6}\).

We tested four different perturbations by shifting the top vertex of the equilateral triangle:
\begin{itemize}
  \item [ \textbf{(A)} ] Horizontal shift right: \((s,t) = \bigl(\tfrac{1}{2} + \epsilon,\,\tfrac{\sqrt{3}}{2}\bigr)\).
  \item [ \textbf{(B)} ]  Horizontal shift left: \((s,t) = \bigl(\tfrac{1}{2} - \epsilon,\,\tfrac{\sqrt{3}}{2}\bigr)\).
  \item [ \textbf{(C)} ] Vertical shift up: \((s,t) = \bigl(\tfrac{1}{2},\,\tfrac{\sqrt{3}}{2} + \epsilon\bigr)\).
  \item [ \textbf{(D)} ] Vertical shift down: \((s,t) = \bigl(\tfrac{1}{2},\,\tfrac{\sqrt{3}}{2} - \epsilon\bigr)\).
\end{itemize}

Figure \ref{fig:all-triangles} displays the computed second \((u_{2}\), left column\() \) and third \((u_{3}\), right column\() \) eigenfunctions for each of these four perturbated domains. Qualitatively, the results show that the algorithm successfully separates the nearly degenerate modes into two distinct eigenfunctions whose structures reflect the nature of the symmetry-breaking perturbation.

For the horizontal perturbations (A and B), the algorithm resolves the eigenfunctions into modes that are almost antisymmetric about the triangle’s vertical axis. For the vertical perturbations (C and D), the computed eigenfunctions $u_3$ in C and $u_2$ in D show the perfect antisymmetry about the triangle’s vertical axis.

In all cases, despite the underlying eigenvalues being separated by only a minuscule amount, the proposed method robustly computes two smooth, well-defined, and orthogonal eigenfunctions. This result further validates the stability and accuracy of our approach for problems with clustered eigenvalues induced by domain perturbation.

\begin{figure}[!htbp]
  \centering
  \begin{subfigure}[b]{0.48\textwidth}
    \centering
    \includegraphics[width=\textwidth]{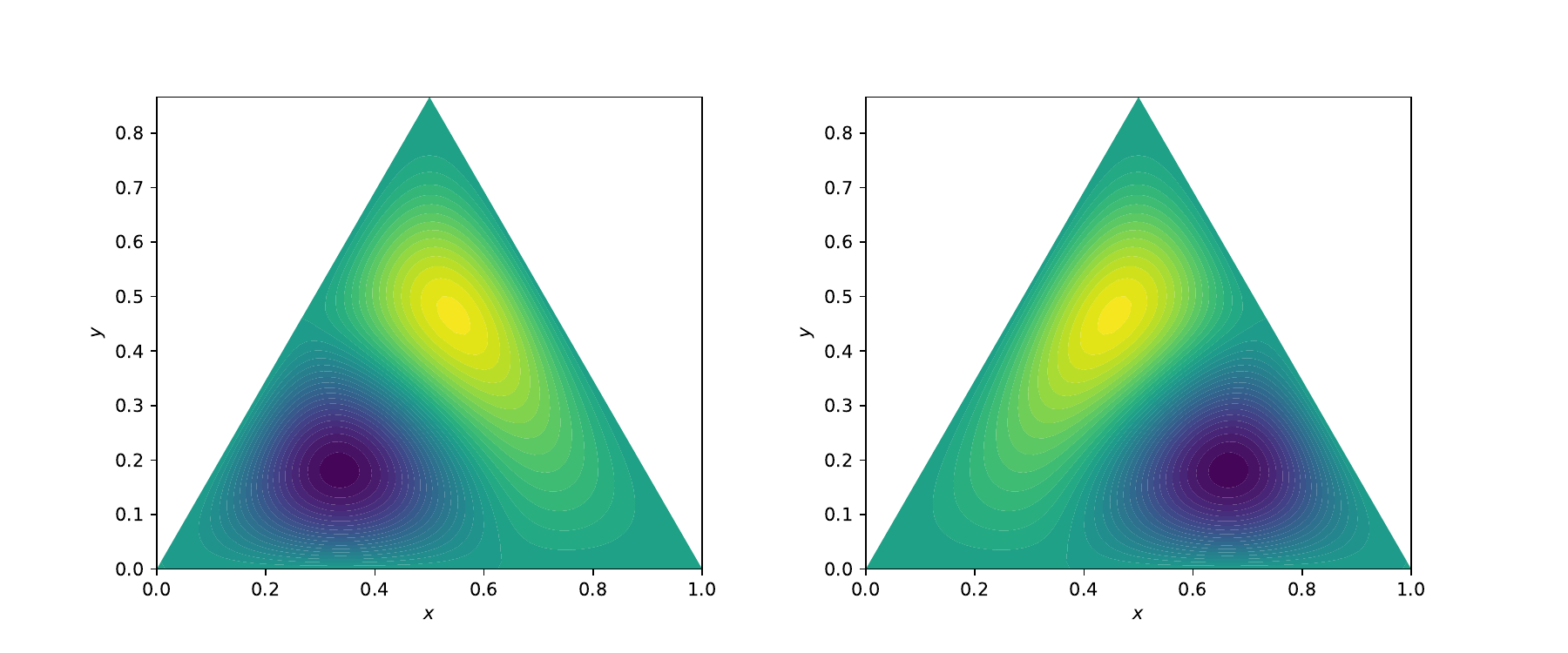}
    \caption{$(s,t)=(1/2+\varepsilon,\sqrt{3}/2)$.}
    \label{fig:+x-triangle}
  \end{subfigure}
  \hfill
  \begin{subfigure}[b]{0.48\textwidth}
    \centering
    \includegraphics[width=\textwidth]{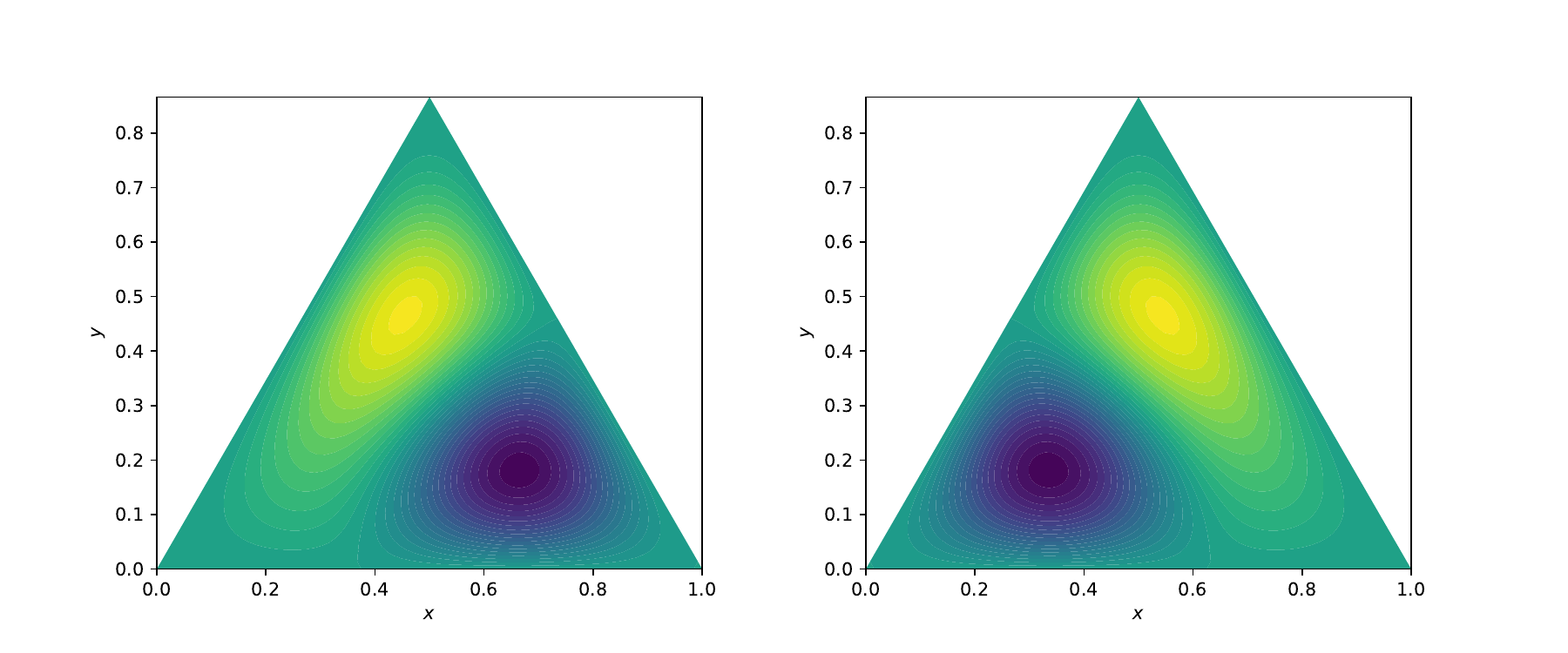}
    \caption{$(s,t)=(1/2-\varepsilon,\sqrt{3}/2)$.}
    \label{fig:-x-triangle}
  \end{subfigure}
  \vspace{2ex}

  \begin{subfigure}[b]{0.48\textwidth}
    \centering
    \includegraphics[width=\textwidth]{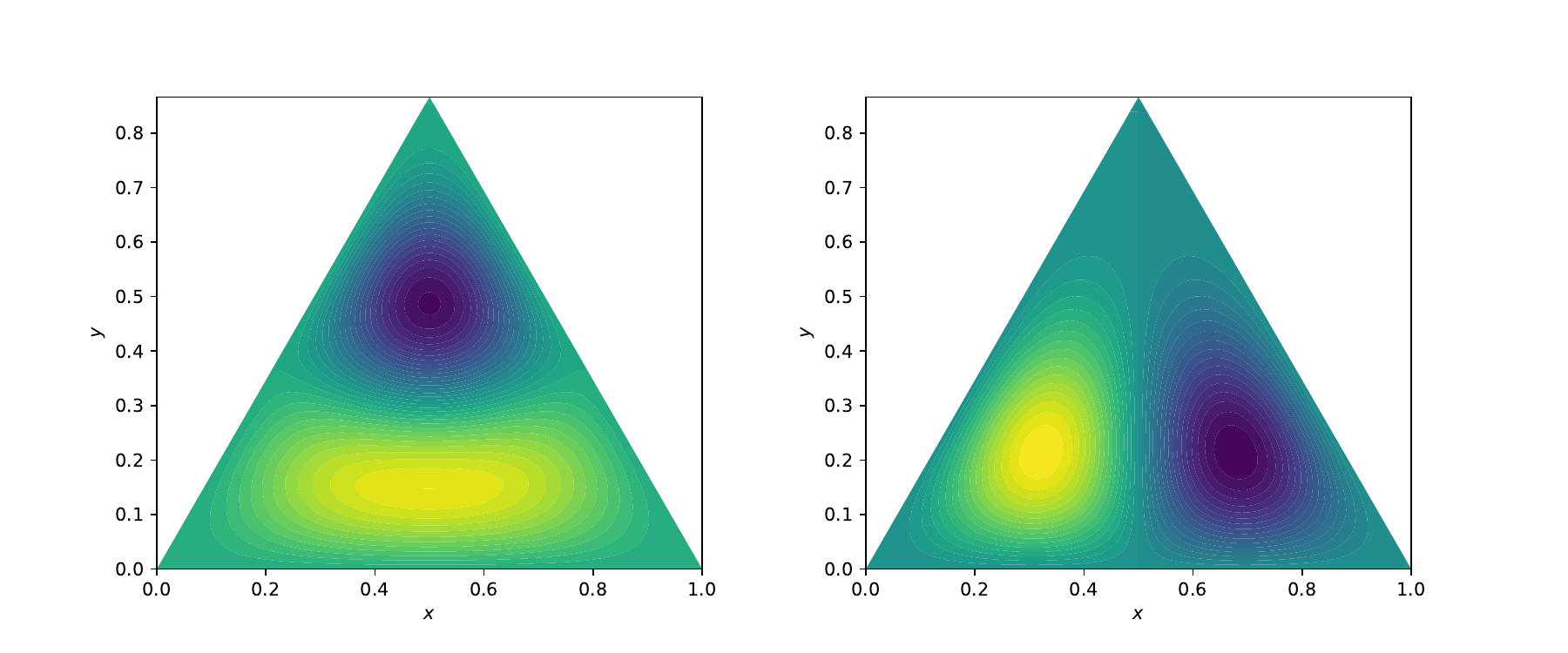}
    \caption{$(s,t)=(1/2,\sqrt{3}/2+\varepsilon)$.}
    \label{fig:y-triangle}
  \end{subfigure}
  \hfill
  \begin{subfigure}[b]{0.48\textwidth}
    \centering    \includegraphics[width=\textwidth]{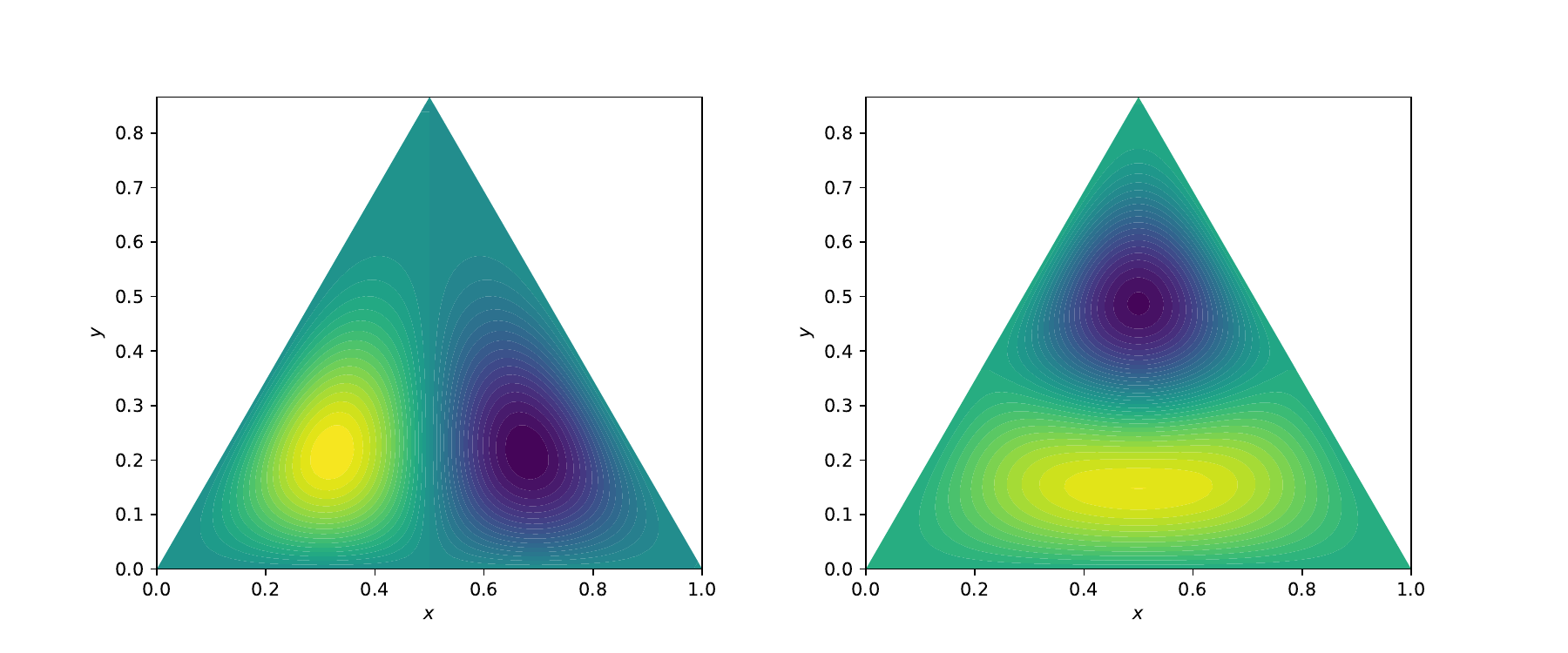}
    \caption{$(s,t)=(1/2,\sqrt{3}/2-\varepsilon)$.}
    \label{fig:-y-triangle}
  \end{subfigure}

  \caption{Eigenfunctions for $\varepsilon=10^{-6}$ ($u_2$ left, $u_3$ right)}

  \label{fig:all-triangles}
\end{figure}
\begin{Remark}
    For the perturbed equilateral triangle, the computed difference quotients of the eigenvalues, $\mu_2$ and $\mu_3$, maintain a large and numerically stable gap $\mu_3-\mu_2=75.76$, irrespective of the direction of perturbation. In contrast, the corresponding eigenvalues $\lambda_2$ and $\lambda_3$ become tightly clustered, as noted in Section 6. For the perturbation magnitude of $\epsilon = 10^{-6}$ used in our numerical examples, the gap $\lambda_3 - \lambda_2$ is on the order of $7.57\times 10^{-6}$. The existence of this stable, well-resolved gap in the difference quotients is the essential feature of our method that ensures the robust computation of distinct eigenfunctions for nearly degenerate modes.
\end{Remark}




\section{Conclusion and Future Work}\label{sec:conclusion}

We have proposed a novel algorithm that stabilizes the computation of Laplacian eigenfunctions under arbitrarily tight eigenvalue clustering caused by domain perturbations. By employing the shape difference quotient and reducing the ill‐posed problem to a small well‐separated generalized eigenproblem, we achieve accurate reconstruction of eigenfunctions even when standard FEM fails. Numerical results on perturbated rectangular domains confirm that our method preserves theoretical symmetries across a wide range of perturbation magnitudes.

Future research includes investigating rigorous a posteriori error estimates to quantify the accuracy of the stabilized eigenfunctions, where the recently developed guaranteed methods for computing eigenvalues and eigenfunctions will play an important role \cite{liu2024guaranteed}.
Another important direction is to develop stable eigenfunction computation techniques for clusters of eigenvalues that are not necessarily perturbations of a repeated eigenvalue—this being the basic assumption underlying our proposed method (see \eqref{eq:basic_assumption_on_eigenvalue}).

\section*{Acknowledgement}

Both authors are supported by Japan Society for the Promotion of Science. The first author is supported by JSPS KAKENHI Grant Number JP24KJ1170. The last author is supported by JSPS KAKENHI Grant Numbers JP20KK0306, JP22H00512, JP24K00538 and JP21H00998. This research is also supported by the bilateral joint research project (Japan-China) JPJSBP120237407.

\bibliographystyle{elsarticle-num} 
\bibliography{references}
						 
\end{document}